\newtheorem{thm}{Theorem}
\newtheorem*{thm*}{Theorem}
\newtheorem{lem}{Lemma}
\newtheorem{cor}[thm]{Corollary}
\theoremstyle{definition}
\newtheorem{defn}{Definition}
\theoremstyle{remark}
\newcommand{\pt}{\mathrm{pt}}
\DeclareMathOperator{\gfree}{g_{\rm free}}
\renewcommand{\int}{\mathop{\rm int}}
\renewcommand{\epsilon}{\varepsilon}
\begin{document}

\title{Tverberg-type theorems for intersecting by rays}

\author{R.N.~Karasev}
\thanks{This research is supported by the Dynasty Foundation, the President's of Russian Federation grant MK-113.2010.1, the Russian Foundation for Basic Research grants 10-01-00096 and 10-01-00139}

\email{r\_n\_karasev@mail.ru}
\address{
Roman Karasev, Dept. of Mathematics, Moscow Institute of Physics and Technology, Institutskiy per. 9, Dolgoprudny, Russia 141700}

\keywords{central point theorem, Tverberg's theorem, Helly's theorem}
\subjclass[2000]{52A20, 52A35, 52C35}

\begin{abstract}
In this paper we consider some results on intersection between rays and a given family of convex, compact sets. These results are similar to the center point theorem, and Tverberg's theorem on partitions of a point set.
\end{abstract}

\maketitle

\section{Introduction}

In this paper we consider some results on intersection between rays and a given family of convex, compact sets, that resemble the center point theorem of~\cite{neumann1945,rado1946}, and Tverberg's theorem on partitions from~\cite{tver1966}. 

Let us make a definition. Consider a straight line $\ell\subset\mathbb R^d$ and a point $p\in\ell$. The point $p$ divides $\ell$ into two half-lines, we call these half-lines \emph{rays starting at $p$}. We are going to study the questions of the following type: given a family $\mathcal F$ of convex sets in $\mathbb R^d$, find a point $p\in\mathbb R^d$ such that every ray starting at $p$ intersects at least $\alpha|\mathcal F|$ members of $\mathcal F$, or at most $\beta|\mathcal F|$ members of $\mathcal F$. Such questions were considered before in~\cite{ruhu1999, kar2008dcpt}, for the case of hyperplanes, and in~\cite{fhp2009,kar2009} for families of convex sets.

The following theorem is similar to the ``dual'' Tverberg theorem for hyperplanes from~\cite{kar2008dcpt}, the statements of this kind (with minor differences) for hyperplanes were conjectured in~\cite{ruhu1999}.

\begin{thm}
\label{tverberg-rays}
Let $\mathcal F$ be a family of $n$ compact convex sets in $\mathbb R^d$, such that any point $x\in\mathbb R^d$ belongs to at most $c$ sets of $\mathcal F$. Suppose that $r$ is a prime power and the following inequality holds
$$
n\ge (d+1)(r-1)+c+1.
$$
Then $\mathcal F$ has $r$ disjoint subfamilies $\mathcal F_1, \ldots, \mathcal F_r$, such that there exists a point $p\in\mathbb R^d$ with the following property: for any ray $\rho$ starting at $p$, and any subfamily $\mathcal F_i$, there exists $K\in\mathcal F_i$ such that $\rho\cap K=\emptyset$.
\end{thm}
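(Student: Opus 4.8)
The plan is to argue by contradiction via the configuration space/test map scheme, after first moving everything to the sphere of directions. For a point $p$ and a compact convex $K$, let $U_K(p)\subseteq S^{d-1}$ be the set of directions $u$ for which the ray $p+\mathbb R_{\ge0}u$ meets $K$; this is the radial projection of $K-p$, hence a spherically convex set. The property required of a block $\mathcal F_i$ — that every ray from $p$ misses some member of $\mathcal F_i$ — is then exactly $\bigcap_{K\in\mathcal F_i}U_K(p)=\emptyset$. So I must produce a partition $\mathcal F=\mathcal F_1\sqcup\cdots\sqcup\mathcal F_r$ together with a point $p$ for which $\bigcap_{K\in\mathcal F_i}U_K(p)=\emptyset$ for every $i$, and I will assume no such configuration exists.

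Next I would encode weighted partitions by the $r$-fold $2$-wise deleted join $C=(\Delta^{n-1})^{*r}_{\Delta}\cong([r])^{*n}$, the join of $n$ copies of the discrete $r$-point space. This is a wedge of $(r-1)^n$ spheres $S^{n-1}$, hence $(n-2)$-connected, and it carries the free cyclic $\mathbb Z/r$-action permuting the $r$ colours. From the negated hypothesis, every point $p$ and every point of $C$ yields a block $\mathcal F_i$ with a common direction $u_i\in\bigcap_{K\in\mathcal F_i}U_K(p)$, and out of this witness-data I intend to assemble a $\mathbb Z/r$-equivariant map from $C$ (with $p$ incorporated as below) to a representation sphere $S(V)$ of dimension $(d+1)(r-1)+c-1$. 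Here $V$ is modelled on $W_r^{\oplus(d+1)}$, with $W_r$ the reduced regular representation; the factor $d+1$ records position together with a homogeneous hitting coordinate exactly as in the ordinary Tverberg test map, and the extra $c$ dimensions absorb a depth correction.

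Since $r$ is a prime power I would finish with the Özaydın--Volovikov theorem rather than Dold's: the restriction of $W_r$ to a $\mathbb Z/p$-subgroup contains trivial summands, so $S(V)$ is not a free $\mathbb Z/r$-space and Dold does not apply, but the ideal-valued equivariant index of $C$ is nonzero through dimension $n-2$, and $n-2\ge(d+1)(r-1)+c-1$ then forbids the equivariant map to $S(V)$, contradicting its construction.

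The hard part is the free point $p$ together with the degenerate configurations, and this is precisely where the depth bound is spent. I would treat $p$ either as an extra compact parameter (a large ball joined to $C$) or via a canonical selection, and I must keep the test map defined and equivariant where $U_K(p)$ degenerates — when $p$ lies in many sets so that $U_K(p)=S^{d-1}$, or when a block happens to share a common point and is forced to be bad. The hypothesis that every point lies in at most $c$ sets bounds the dimension of this degenerate locus by about $c$, and the slack $n\ge(d+1)(r-1)+c+1$ is tuned so that the connectivity $n-2$ still exceeds the target dimension after the bad locus is excised. The model for this bookkeeping is the case $d=1$, $r=2$: there one needs a point with at least two sets strictly to each side, and an inclusion–exclusion count shows any point failing this lies in at least $n-2$ sets, so the depth bound $n-2>c$ forces the required point — exactly the additive role of $c$ that the general argument must reproduce.
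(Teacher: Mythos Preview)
The outline has the right equivariant-topology flavour, but the two load-bearing steps are intentions rather than constructions. First, the test map: you propose to argue by contradiction and ``assemble'' a $G$-map $C\to S(V)$ from witnesses $u_i\in\bigcap_{K\in\mathcal F_i}U_K(p)$. The negated hypothesis only says that for each $(p,\text{partition})$ \emph{some} block has nonempty intersection; which block, and which direction inside it, jump discontinuously, and no continuous equivariant selection is offered. Nor is the incorporation of the free point $p$ made precise. The paper does not argue by contradiction at all: it writes the map down explicitly as
\[
f(\alpha_1,\dots,\alpha_r,p)=\bigoplus_{i=1}^r\sum_{K\in\mathcal F}\alpha_i(K)\bigl(\pi_K(p)-p\bigr)
\]
on $\Delta^r_\Delta\times B$, where $\pi_K$ is the nearest-point projection onto $K$, and studies the zero set $Z$.

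The more serious gap is the role of $c$. ``Extra $c$ dimensions absorb a depth correction'' and ``excise the bad locus while keeping connectivity $n-2$'' are not operations one can actually perform: there is no $G$-representation summand that encodes ``every point lies in at most $c$ sets,'' and removing the degenerate configurations from the domain destroys the connectivity estimate you need for Volovikov. The paper's mechanism is different and is the idea you are missing. The Euler-class computation shows that $Z$ has free $G$-genus at least $n-(r-1)(d+1)\ge c+1$. The degenerate configurations --- those with $p\in K$ for some $K$ having $\alpha_i(K)>0$ --- are covered by the open family $\{U_K\}_{K\in\mathcal F}$, where $U_K=\{(\alpha,p)\in Z:\exists i,\ \alpha_i(K)>0\text{ and }p\in K(\varepsilon)\}$. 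Each $U_K$ is inessential (on the deleted product the index $i$ with $\alpha_i(K)>0$ is unique, so $G$ permutes the components freely), and the covering multiplicity is at most $c$ (since any $p$ lies in at most $c$ of the $K(\varepsilon)$). A genus-pigeonhole lemma --- if a free $G$-space of genus $g$ is covered by inessential open sets then some point is covered at least $g$ times --- then forces a point of $Z$ outside every $U_K$. At that configuration $p\notin K$ whenever $\alpha_i(K)>0$, and since $f=0$ says $p$ lies in the convex hull of the projections $\pi_K(p)$ for each block, an elementary $90^\circ$-angle observation gives the ray-missing conclusion. This genus/covering-multiplicity step is precisely how the additive $c$ in the hypothesis is spent, and it is not visible in your sketch.
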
  

The following theorem is a generalization of the result of~\cite{fhp2009}, see also~\cite{ruhu1999}, where a particular case was conjectured for families of hyperplanes. This is an analogue of the central point theorem for finite point sets, see~\cite{neumann1945,rado1946,grun1960}.

\begin{cor}
\label{cpt}
Let $\mathcal F$ be a family of $n$ compact convex sets in $\mathbb R^d$, such that any point $x\in\mathbb R^d$ belongs to at most $c$ sets of $\mathcal F$. Suppose that $r$ is a positive integer and the following inequality holds
$$
n\ge (d+1)(r-1)+c+1.
$$
Then there exists a point $p\in\mathbb R^d$ such that any ray $\rho$ starting at $p$ does not intersect at least $r$ of the sets in $\mathcal F$.
\end{cor}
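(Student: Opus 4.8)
The plan is to deduce the statement from Theorem~\ref{tverberg-rays}, which already covers the prime power case, by a replication argument. First I would dispose of the case when $r$ is itself a prime power: there the conclusion is immediate, because the $r$ disjoint subfamilies $\mathcal F_1,\dots,\mathcal F_r$ produced by the theorem each contain a set missed by a given ray $\rho$ from $p$, and these sets are pairwise distinct since the subfamilies are disjoint. Hence every ray from $p$ misses at least $r$ members of $\mathcal F$.

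For an arbitrary positive integer $r$ I would pass to the $k$-fold replicated family $\mathcal F^{(k)}$, obtained by replacing each $K\in\mathcal F$ by $k$ identical copies of itself, where $k$ is a large integer to be fixed later. This family has $kn$ sets; since the $k$ copies of a set are intersected by exactly the same rays, every point lies in at most $kc$ of them, and a ray $\rho$ misses a set $K$ if and only if it misses all $k$ copies of $K$, so the number of members of $\mathcal F^{(k)}$ missed by $\rho$ equals $k$ times the number of members of $\mathcal F$ missed by $\rho$. I would then choose a prime $r'$ in the range
$$
k(r-1)+1 \le r' \le k(r-1)+1+\left\lfloor\frac{k-1}{d+1}\right\rfloor .
$$
A short computation using $n\ge(d+1)(r-1)+c+1$ shows that this is exactly the range in which the hypothesis $kn\ge (d+1)(r'-1)+kc+1$ of Theorem~\ref{tverberg-rays} still holds for $\mathcal F^{(k)}$, while the left endpoint forces $r'>k(r-1)$. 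Applying the theorem to $\mathcal F^{(k)}$ with the prime $r'$ then yields a point $p$ such that every ray from $p$ misses at least $r'$ copies, hence at least $r'/k>r-1$, that is at least $r$, of the original sets.

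The main obstacle is purely number-theoretic: one must guarantee that the displayed interval actually contains a prime. Its length $\lfloor (k-1)/(d+1)\rfloor$ grows linearly in $k$, while its left endpoint is of order $k(r-1)$, so the interval has the form $[M,(1+\delta)M]$ with $\delta=\tfrac{1}{(d+1)(r-1)}$ a fixed positive constant. I would invoke the prime number theorem, by which the number of primes in such an interval is asymptotic to $\delta M/\log M$ and in particular is positive once $k$ (hence $M$) is large enough; fixing such a $k$ completes the reduction. The point of the replication trick is precisely that it lets us overshoot to the next available prime without spoiling the ratio between $n-c$ and the target value, something that cannot be achieved by adjoining auxiliary sets to $\mathcal F$ directly.
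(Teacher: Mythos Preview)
Your argument is correct and follows the same replication strategy as the paper: replace $\mathcal F$ by $k$ copies of each set, apply Theorem~\ref{tverberg-rays} with a prime parameter exceeding $k(r-1)$, and divide by $k$ to recover the conclusion for $\mathcal F$. The only difference is the number-theoretic input used to locate a suitable prime. The paper invokes Dirichlet's theorem on primes in arithmetic progressions to pick $k$ so that $R=k(r-1)+1$ is itself prime; then the inequality $kn\ge(d+1)(R-1)+kc+1$ reduces exactly to $k\cdot\bigl(n-(d+1)(r-1)-c\bigr)\ge 1$, which holds by hypothesis. You instead allow the prime $r'$ to lie anywhere in $[k(r-1)+1,\,k(r-1)+1+\lfloor(k-1)/(d+1)\rfloor]$ and appeal to the prime number theorem to guarantee one exists for large $k$. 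Both are valid; the paper's route is a bit cleaner since it hits the endpoint exactly and avoids the interval computation, while your version has the minor advantage that a Bertrand-type result (primes in $[M,(1+\delta)M]$) is in principle weaker than Dirichlet.
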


Theorem~\ref{tverberg-rays} is formulated for compact sets, and the compactness is essential in the proof. Still, it is possible to formulate a similar result for hyperplanes. Let us make some definitions.

\begin{defn}
A convex open subset $G\subset\mathbb R^d$ is called \emph{almost bounded}, if it does not contain an open cone. Equivalently, for any point $p\in G$ the set of rays starting at $p$, and lying within $G$, has an empty interior as a subset of the unit sphere $S^{n-1}$.
\end{defn}

\begin{defn}
For a family of hyperplanes $\mathcal G$ in $\mathbb R^d$ denote by $C(\mathcal G)$ the union of all almost bounded components of the complement $\mathbb R^d\setminus \bigcup\mathcal G$.
\end{defn}

The following theorem generalizes the dual Tverberg theorem from~\cite{kar2008dcpt} to the case, when hyperplanes are not in general position. This statement is also a partial solution of Conjecture~2 in~\cite{ruhu1999}.

\begin{thm}
\label{tverberg-hp}
Let $\mathcal F$ be a family of $n$ hyperplanes in $\mathbb R^d$, such that any point $x\in\mathbb R^d$ belongs to at most $c$ hyperplanes of $\mathcal F$. Suppose that $r$ is a prime power and the following inequality holds
$$
n\ge (d+1)(r-1)+c+1.
$$
Then $\mathcal F$ has $r$ disjoint subfamilies $\mathcal F_1, \ldots, \mathcal F_r$, such that
$$
\bigcap_{i=1}^r C(\mathcal F_i)\not=\emptyset. 
$$
\end{thm}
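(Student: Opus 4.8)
The plan is to reduce Theorem~\ref{tverberg-hp} to the compact case, Theorem~\ref{tverberg-rays}, by truncating the hyperplanes and passing to a limit. For a large radius $R$, replace each hyperplane $H\in\mathcal F$ by the compact convex disk $H\cap B_R$, where $B_R$ is the ball of radius $R$ centered at the origin. Since $H\cap B_R\subset H$, any point lies in at most $c$ of these disks, so the family $\{H\cap B_R:H\in\mathcal F\}$ satisfies the hypotheses of Theorem~\ref{tverberg-rays} with the same $n$, $c$, $r$. Applying that theorem yields a partition $\mathcal F=\mathcal F_1^R\sqcup\dots\sqcup\mathcal F_r^R$ and a point $p_R$ such that every ray from $p_R$ misses at least one disk $H\cap B_R$ in each subfamily. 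As there are only finitely many partitions of $\mathcal F$ into $r$ labelled parts, one partition $\mathcal F_1,\dots,\mathcal F_r$ occurs for a sequence $R_k\to\infty$; along it I would extract a convergent subsequence $p_{R_k}\to p$ and aim to show $p\in\bigcap_i C(\mathcal F_i)$.

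The conceptual heart is an antipodal identity that matches the conclusion of Theorem~\ref{tverberg-rays} to almost-boundedness. Fix a subfamily $\mathcal F_i$ and a point $p$ lying off all its hyperplanes. For each $H\in\mathcal F_i$ and each direction $v\in S^{d-1}$ for which the line is not parallel to $H$, exactly one of the two opposite rays $p+\mathbb R_{\ge0}v$ and $p-\mathbb R_{\ge0}v$ crosses $H$. Hence the statement ``every ray from $p$ misses some $H\in\mathcal F_i$'' becomes, after the substitution $v\mapsto -v$, the statement ``every ray from $p$ crosses some $H\in\mathcal F_i$''; and a ray crosses some hyperplane of $\mathcal F_i$ exactly when it leaves the component $U$ of $\mathbb R^d\setminus\bigcup\mathcal F_i$ containing $p$. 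Thus the set of directions whose ray stays in $U$ has empty interior, i.e. $U$ is almost bounded and $p\in C(\mathcal F_i)$. In the limit $R_k\to\infty$ the condition ``the ray misses the disk $H\cap B_{R_k}$'' passes to the non-strict condition ``the ray does not cross $H$'' (a crossing direction has its crossing point at finite distance, hence inside $B_{R_k}$ for large $k$); the only loss is at the boundary, nearly-parallel directions, and this is exactly why the limit yields almost-boundedness (empty interior of the escaping directions) rather than actual boundedness of $U$.

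The main obstacle is controlling the limit, and two points need care. First, one must rule out $p_R\to\infty$: a point very far from the arrangement lies in an unbounded cell containing a cone, so it cannot give a point of $C(\mathcal F_i)$, and if the $p_R$ escaped to infinity the argument above would collapse. I would try to show that the points supplied by Theorem~\ref{tverberg-rays} may be kept in a bounded neighbourhood of $\bigcup\mathcal F$ --- for instance by analysing the topological test-map behind Theorem~\ref{tverberg-rays}, or, failing a direct bound, by interpreting an escaping sequence as a configuration ``at infinity'' in the direction $u=\lim p_R/|p_R|$ and setting up an induction on $d$. Second, the limit point $p$ might land on some hyperplane of some $\mathcal F_i$; since $C(\mathcal F_i)$ is open in the complement, I would either perturb $p$ slightly, using that the almost-boundedness condition is stable, or keep track of the strict crossing inequalities along the sequence so that the limiting conditions still confine $p$ to a single almost-bounded cell. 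Granting these two technical points, the antipodal identity places the limit point simultaneously in every $C(\mathcal F_i)$, so $\bigcap_{i=1}^r C(\mathcal F_i)\neq\emptyset$, as required.
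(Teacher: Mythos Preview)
Your antipodal observation---that for a point $p$ off every $H\in\mathcal F_i$, ``every ray from $p$ misses some $H$'' is equivalent (up to parallel directions) to ``every ray from $p$ meets some $H$'', hence to almost-boundedness of the cell of $p$---is correct and is exactly the geometric link needed at the end. But the truncate-and-limit strategy has a genuine gap precisely where you flag it: there is no a priori bound on $p_R$. The zero-set condition in the proof of Theorem~\ref{tverberg-rays} only says that $p_R$ is a convex combination of its projections onto the disks $H\cap B_R$, which traps $p_R$ in $B_R$ but not in any region independent of $R$; and your two suggested fixes (dig into the test map, or induct on $d$) are not carried out.

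The paper does not apply Theorem~\ref{tverberg-rays} as a black box. It reruns the test-map argument with projections onto the \emph{full} hyperplanes, and replaces your missing compactness bound by Lemma~\ref{projbounded} (Aharoni--Duchet--Wajnryb): there exists a convex body $P$ with $\pi_H(P)\subseteq P$ for every $H\in\mathcal F$. One then works on $\Delta^r_\Delta\times P$. The zero set of $f$ may still touch $\partial P$, so $f$ is perturbed to $f_\varepsilon$ by replacing $\pi_H(p)$ with $(1-\varepsilon)\pi_H(p)$, which forces all zeros strictly inside $P$; a one-line semicontinuity argument (an inessential open cover of $Z$ would also cover $Z_\varepsilon$ for small $\varepsilon$) transfers the genus bound $\gfree(Z_\varepsilon)\ge c+1$ to $\gfree(Z)\ge c+1$. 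From there the proof of Theorem~\ref{tverberg-rays} is copied verbatim.

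This route also disposes of your second obstacle. Because the genus argument is run \emph{inside} the proof, it outputs a configuration with $p$ at distance greater than some fixed $\varepsilon>0$ from every hyperplane in the selected subfamilies (the sets $U_K$ are built from $\varepsilon$-neighborhoods, and one works in the compact body $P$, where such an $\varepsilon$ with multiplicity $\le c$ exists). Your proposed cure---perturb the limit point $p$---is not safe: a point on a hyperplane can border an unbounded cell, so a small push may land in the wrong component, and the almost-boundedness of a cell says nothing about its neighbors. In short, the architecture you sketch is reasonable and the endgame is right, but both lacunae you name are real; the paper closes them by the invariant body $P$ and the $\varepsilon$-perturbed test map rather than by a limit over truncations.
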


The proofs in this paper mostly follow the proofs in~\cite{kar2008dcpt}, the essential difference is that the general position requirements are substituted by an upper bound of the covering multiplicity of a family. Such strengthening is allowed by an accurate use of the concept of the Krasnosel'skii-Schwarz genus (see~Section~\ref{genus-sec} for the definition) to avoid singular configurations that give a solution of the topological problem (in terms of sections of a vector bundle), but do not correspond to the solution of the original geometric problem.

\section{Facts from topology}
\label{topology}

In this section some topological facts, that arise in the proof of Theorem~\ref{tverberg-rays} are given. In fact, the first part of the proof follows the proof of Theorem~1.1 in~\cite{kar2009}, this and the following sections restate the needed lemmas.

We consider topological spaces with continuous (left) action of a finite group $G$ and continuous maps between such spaces that commute with the action of $G$. We call them $G$-spaces and $G$-maps. In this paper we actually consider groups $G=(Z_p)^k$ for prime $p$, called usually \emph{$p$-tori}, but most of the definitions are valid for arbitrary finite group $G$.

For basic facts about (equivariant) topology and vector bundles the reader is referred to the books~\cite{hsiang1975,mishch1998,milsta1974}. The cohomology is taken with coefficients $Z_p$ ($p$ is the same as in the definition of $G$), in notations we omit the coefficients. Let us start from some standard definitions.

\begin{defn}
Denote by $EG$ the classifying $G$-space, which can be thought of as an infinite join $EG=G*\dots *G*\dots$ with diagonal left $G$-action. Denote $BG=EG/G$. For any $G$-space $X$ denote $X_G=(X\times EG)/G$, and put (\emph{equivariant cohomology in the sense of Borel}) $H_G^*(X) = H^*(X_G)$. It is easy to verify that for a free $G$-space $X$, the space $X_G$ is homotopy equivalent to $X/G$. 
\end{defn}

Consider the algebra of $G$-equivariant cohomology of the point $A_G = H_G^*(\pt) = H^*(BG)$. For a group $G=(Z_p)^k$, the algebra $A_G=H_G^*(Z_p)$ has the following structure (see~\cite{hsiang1975}). In the case $p$ odd it has $2k$ multiplicative generators $v_i,u_i$ with dimensions $\dim v_i = 1$ and $\dim u_i = 2$ and relations
$$
v_i^2 = 0,\quad\beta{v_i} = u_i.
$$
We denote by $\beta(x)$ the Bockstein homomorphism. 

In the case $p=2$ the algebra $A_G$ is the algebra of polynomials of $k$ one-dimensional generators $v_i$.

Any representation of $G$ can be considered as a vector bundle over the point $\pt$, and it has corresponding characteristic classes in $H_G^*(\pt)$. We need the following lemma, that follows from the results of~\cite{hsiang1975}, Chapter III \S 1.

\begin{lem}
\label{euler-nz}
Let $G=(Z_p)^k$, and let $I[G]$ be the subspace of the group algebra $\mathbb R[G]$, consisting of elements
$$
\sum_{g\in G} a_g g,\quad \sum_{g\in G} a_g = 0.
$$
Then the Euler class $e(I[G])\not=0\in A_G$ and is not a divisor of zero in $A_G$. 
\end{lem}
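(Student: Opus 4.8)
The plan is to understand the representation $I[G]$ concretely and decompose it into irreducible (one-dimensional, over $\mathbb{C}$) pieces, then compute the Euler class as a product of the Euler classes of these pieces, and finally show that each factor is a nonzero non-divisor of zero in $A_G$.

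**Understanding the representation.** First I would observe that $\mathbb{R}[G]$ is the regular representation of $G=(Z_p)^k$, and $I[G]$ is the augmentation ideal, i.e. the kernel of the trivial-representation projection $\sum a_g g \mapsto \sum a_g$. Since the regular representation decomposes as the sum of all irreducibles (each appearing once, as $G$ is abelian), $I[G]$ is the sum of all \emph{nontrivial} irreducible representations of $G$. For an abelian group these are the nontrivial characters $\chi: G \to \mathbb{C}^\times$, of which there are $|G|-1 = p^k - 1$. Over $\mathbb{R}$ the nontrivial characters pair up into two-dimensional real irreducibles (a character and its conjugate), but for the purpose of computing the total Euler class it is cleanest to work with the complexification and identify each one-dimensional complex character-representation $V_\chi$ with its real Euler class.

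**Computing the Euler class as a product.** The key step is that the Euler class of a direct sum is the product of Euler classes: $e(I[G]) = \prod_{\chi \neq 1} e(V_\chi)$, where the product runs over nontrivial characters. Each $V_\chi$, viewed as a complex line bundle over $BG$, has Euler class equal to its first Chern class. I would then identify $e(V_\chi) \in A_G = H^*(BG)$ explicitly in terms of the multiplicative generators: writing $\chi = \sum_i a_i \chi_i$ additively in terms of the coordinate characters $\chi_i$ of the $i$-th factor $Z_p$, the Euler class should come out (up to the Bockstein relation $\beta v_i = u_i$) as a linear combination $\sum_i a_i u_i$ for $p$ odd, and as $\sum_i a_i v_i$ (a linear form in the polynomial generators) for $p=2$. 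This is exactly the content of~\cite{hsiang1975}, Chapter III \S 1, which I would invoke.

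**Nonvanishing and non-divisor-of-zero.** It then remains to show that the product $\prod_{\chi \neq 1} e(V_\chi)$ is nonzero and not a zero-divisor in $A_G$. For $p=2$, $A_G$ is the polynomial algebra $Z_2[v_1,\dots,v_k]$, which is an integral domain, so it suffices that each factor $e(V_\chi)$, being a nonzero linear form $\sum_i a_i v_i$, is nonzero; the product of nonzero elements in a domain is automatically nonzero and a non-divisor of zero. For $p$ odd the algebra $A_G$ is not a domain because of the exterior generators $v_i$ with $v_i^2 = 0$, so the argument is more delicate: here I expect the factors to be the two-dimensional classes $\sum_i a_i u_i$ living in the polynomial subalgebra $Z_p[u_1,\dots,u_k]$, which \emph{is} a domain, so the product lies in this domain and is again a nonzero non-divisor of zero there; one then checks that an element of the polynomial part cannot be annihilated by anything in the full algebra except by exploiting the $v_i$-part, which a careful degree/grading argument rules out.

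**Main obstacle.** The hard part will be the odd-$p$ case: one must argue carefully that an element which is a nonzero non-divisor of zero in the polynomial subalgebra $Z_p[u_i]$ remains a non-divisor of zero in the full algebra $A_G$, where the nilpotent exterior generators $v_i$ could in principle create annihilators. The cleanest route is to use the fact (from~\cite{hsiang1975}) that $A_G$ is a free module over its polynomial subalgebra $Z_p[u_1,\dots,u_k]$ with basis the monomials in the $v_i$; multiplication by an element of the polynomial subalgebra then acts diagonally on this free basis, so a non-divisor of zero in $Z_p[u_i]$ stays a non-divisor of zero in $A_G$. Establishing this module structure, or quoting it precisely from the reference, is the crux of the proof.
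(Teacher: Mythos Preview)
The paper does not actually prove this lemma: it simply states that the result ``follows from the results of~\cite{hsiang1975}, Chapter III \S 1.'' Your proposal is a correct and complete unpacking of exactly that reference---decomposing $I[G]$ as the sum of all nontrivial characters, identifying each factor's Euler class as a nonzero linear form in the polynomial generators $u_i$ (for $p$ odd) or $v_i$ (for $p=2$), and using the free-module structure of $A_G$ over its polynomial subalgebra to conclude the product is a non-zero-divisor. So there is no discrepancy to report: you have supplied the argument the paper only cites.

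One small point worth tightening: for odd $p$, be explicit that the $(p^k-1)$ nontrivial complex characters pair off as $\chi,\bar\chi$ into $(p^k-1)/2$ real two-dimensional irreducibles, and that the Euler class of each such real $2$-plane equals (up to a unit in $Z_p$) the first Chern class $\sum_i a_i u_i$ of the associated complex line $V_\chi$. You allude to this but then speak of ``working with the complexification,'' which could be misread as passing to $I[G]\otimes\mathbb{C}$ and doubling the dimension. Keeping the count straight---$(p^k-1)/2$ factors of degree $2$ for odd $p$, versus $2^k-1$ factors of degree $1$ for $p=2$---makes the total degree $p^k-1$ manifest in both cases.
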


Note that in this lemma the fact that $G=(Z_p)^k$ is essential.

\section{Topology of Tverberg's theorem}

This paper reproduces some lemmas from~\cite{kar2009}. In Tverberg's theorem and its topological generalizations (see~\cite{bss1981,vol1996} for example) it is important to consider the configuration space of $r$-tuples of points $x_1,\ldots, x_r\in \Delta^N$ with pairwise disjoint supports. Here $\Delta^N$ is a simplex of dimension $N$. Let us make some definitions, following the book~\cite{mat2003}.

\begin{defn}
Let $K$ be a simplicial complex. Denote by $K_{\Delta}^r$ the subset of the $r$-fold product $K^r$, consisting of the $r$-tuples $(x_1,\ldots, x_r)$ such that every pair $x_i, x_j$ ($i\not=j$) has disjoint supports in $K$. We call $K_{\Delta}^r$ the \emph{$r$-fold deleted product of $K$}.
\end{defn}

\begin{defn}
Let $K$ be a simplicial complex. Denote by $K_{\Delta}^{*r}$ the subset of the $r$-fold join $K^{*r}$, consisting of convex combinations $w_1x_1\oplus\dots\oplus w_rx_r$ such that every pair $x_i, x_j$ ($i\not=j$) with weights $w_i,w_j>0$ has disjoint supports in $K$. We call $K_{\Delta}^{*r}$ the \emph{$r$-fold deleted join of $K$}.
\end{defn}

Note that the deleted join is a simplicial complex again, while the deleted product has no natural simplicial complex structure, although it has some cellular complex structure.

The $r$-fold deleted product of the simplex $\Delta^{(r-1)(d+1)}$ is the natural configuration space in Tverberg's theorem, but sometimes it is simpler to use the deleted join. Denote by $[r]$ the set $\{1, \ldots, r\}$, with the discrete topology.

If $r$ is a prime power $r=p^k$, then the group $G=(Z_p)^k$ can be somehow identified with $[r]$, so a $G$-action on $K_\Delta^r$ and $K_\Delta^{*r}$ by permuting $[r]$ arises. The following lemma is well-known, see~\cite{vol1996} for example.

\begin{lem}
\label{del-join-ind}
The deleted join of the simplex $(\Delta^N)_\Delta^{*r} = [r]^{*N+1}$ is $N-1$-connected, and the natural map $A_G^l\to H_G^l((\Delta^N)_\Delta^{*r})$ is injective for $l \le N$.   
\end{lem}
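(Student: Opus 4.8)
The plan is to prove Lemma~\ref{del-join-ind} by establishing two facts about the deleted join $(\Delta^N)_\Delta^{*r}$: first, its identification with the join $[r]^{*(N+1)}$, and second, its high connectivity together with the injectivity of the map on equivariant cohomology in the stated range.

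\emph{Identifying the deleted join.} First I would unwind the definitions. The simplex $\Delta^N$ has $N+1$ vertices, say $\{0,1,\dots,N\}$, and every point of $\Delta^N$ has a support equal to the set of vertices with positive barycentric coordinate. A point of the $r$-fold deleted join is a formal convex combination $w_1 x_1 \oplus \dots \oplus w_r x_r$ where the supports of the $x_i$ carrying positive weight are pairwise disjoint subsets of $\{0,\dots,N\}$. I would observe that assigning to each vertex $j \in \{0,\dots,N\}$ the function that records, for each $i$, the weight placed on $j$ in the $i$-th coordinate, gives a bijection with the join $[r]^{*(N+1)}$: each of the $N+1$ vertices independently ``chooses'' one of the $r$ copies (an element of $[r]$) to be active, and the disjoint-support condition is exactly what forces each vertex to contribute to at most one block. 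This is the standard computation (as in~\cite{mat2003}), and I would present it as a simplicial isomorphism that is moreover $G$-equivariant, where $G$ acts by permuting the $r$ factors on the left and by permuting the $[r]$ labels on the right.

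\emph{Connectivity and the cohomology map.} Having identified the space with $[r]^{*(N+1)}$, the connectivity is immediate: $[r]$ is a nonempty discrete space (in particular $0$-connected, i.e. $(-1)$-connected), and an $m$-fold join of $(-1)$-connected spaces is $(m-2)$-connected. With $m = N+1$ factors this gives that $[r]^{*(N+1)}$ is $(N-1)$-connected, as claimed. Indeed, the join $[r]^{*(N+1)}$ is homotopy equivalent to a wedge of $N$-spheres, so it has reduced cohomology concentrated in dimension $N$ and vanishing below. For the cohomology statement I would pass to the Borel construction: because the reduced cohomology $\widetilde H^l([r]^{*(N+1)})$ vanishes for $l < N$, the fibration $([r]^{*(N+1)})_G \to BG$ has fiber whose reduced cohomology starts in degree $N$, so the Serre spectral sequence (or equivalently the long exact sequence comparing $X_G$ with $\pt_G = BG$) shows that the edge map $A_G^l = H_G^l(\pt) \to H_G^l([r]^{*(N+1)})$ induced by the constant map is an isomorphism for $l < N$ and injective for $l = N$. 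This yields injectivity in the full range $l \le N$.

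\emph{Main obstacle.} The main point requiring care is the cohomological injectivity at the top degree $l = N$ rather than the connectivity, which is formal. I would argue it via the low-degree exact sequence of the Serre spectral sequence for $([r]^{*(N+1)})_G \to BG$: the differentials out of the fiber cohomology in degrees below $N$ cannot kill the base classes, and since the fiber cohomology first appears in degree $N$, there is no differential that can hit the image of $A_G^N$ before it survives to $E_\infty$, giving injectivity. The genuinely delicate bookkeeping is to confirm that these spectral-sequence differentials respect the $G$-module structure and that no class in $A_G^N$ is annihilated; here I would lean on the computation of $A_G = H^*(BG)$ recalled before Lemma~\ref{euler-nz} together with the fact that $G = (Z_p)^k$ acts so that $A_G$ injects, which is the same mechanism underlying the non-vanishing of the Euler class in that lemma.
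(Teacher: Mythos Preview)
Your proposal is correct and matches the paper's own sketch exactly: identify $(\Delta^N)_\Delta^{*r}$ with $[r]^{*(N+1)}$, deduce $(N-1)$-connectivity from the join formula, and then use the Leray--Serre spectral sequence of the Borel fibration to see that the bottom row $A_G^*$ survives to $E_\infty$ in degrees $l\le N$. Two small remarks: your parenthetical ``$0$-connected, i.e.\ $(-1)$-connected'' is a slip (for $r>1$ the space $[r]$ is only $(-1)$-connected), though your subsequent use of the join formula is correct; and your closing suggestion that injectivity at $l=N$ depends on the ring structure of $A_G$ for $G=(Z_p)^k$ is unnecessary, since the spectral-sequence argument you gave already shows no differential can reach $E_r^{l,0}$ for $l\le N$ regardless of $G$.
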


Let us say a few words about the proof. There is the Leray-Serre spectral sequence that relates the ordinary cohomology of a $G$-space $X$ to its equivariant cohomology, the bottom row of $E_2$ in this spectral sequence being $A_G^*$. The connectedness hypothesis implies that the corresponding part of the bottom row survives in $E_\infty$, that is the statement of the lemma.

The next lemma is used in~\cite{vol1996} too, a proof of this lemma can be found in~\cite{kar2009}, for example.

\begin{lem}
\label{del-prod-ind}
Let $r=p^k$, $G=(Z_p)^k$, and let $K$ be a simplicial complex. If the natural map $A_G^l\to H_G^l(K_\Delta^{*r})$ is injective for $l\le N$, then the similar map $A_G^l\to H_G^l(K_\Delta^r)$ is injective for $l \le N - r + 1$.  
\end{lem}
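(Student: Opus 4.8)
The plan is to realize the deleted product $K_\Delta^r$ as the transverse zero locus of a natural equivariant section inside the deleted join $K_\Delta^{*r}$, and to transport the injectivity of $A_G^l\to H_G^l(K_\Delta^{*r})$ across this inclusion by a Gysin/Thom argument whose decisive input is the non-zero-divisor property from Lemma~\ref{euler-nz}. First I would introduce the join (weight) coordinates: a point of $K_\Delta^{*r}$ is $w_1x_1\oplus\cdots\oplus w_rx_r$ with weight vector $(w_1,\ldots,w_r)$ in the weight simplex $\Delta^{r-1}$, and $G$ acts by permuting the $r$ indices, fixing the barycenter $b=(1/r,\ldots,1/r)$. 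Let $U\subset K_\Delta^{*r}$ be the open $G$-subset where all weights are positive; then every pair $x_i,x_j$ must have disjoint supports, so $U=K_\Delta^r\times\rint\Delta^{r-1}$ as $G$-spaces. Contracting the relative interior to $b$ shows that $U$ is $G$-homotopy equivalent to the barycentric copy $Y=K_\Delta^r\times\{b\}$, whence $H_G^*(U)\cong H_G^*(K_\Delta^r)$ compatibly with the $A_G$-algebra structures. Writing $\phi_X:A_G\to H_G^*(K_\Delta^{*r})$ and $\phi_Y:A_G\to H_G^*(K_\Delta^r)$ for the natural ring maps and $f:Y\hookrightarrow K_\Delta^{*r}$ for the inclusion, one has $\phi_Y=f^*\circ\phi_X$, so the statement to be proved is exactly that $f^*\circ\phi_X$ is injective on $A_G^l$ for $l\le N-r+1$.

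The geometric heart is the $G$-equivariant map $\sigma:K_\Delta^{*r}\to I[G]$ sending $z$ to $\pi(z)-b$, where $\pi$ records the weight vector and $I[G]$ is identified with the tangent space $\{\sum w_i=0\}$ of the weight simplex at $b$ (the reduced regular representation, of real dimension $r-1$). This $\sigma$ vanishes precisely on $Y$, and on the tubular neighbourhood $U$ it is the linear normal coordinate, so it vanishes transversally and identifies the normal bundle of $Y$ with $I[G]$. Consequently the equivariant Euler class of this bundle, namely $\phi_X(e(I[G]))$, coincides with the image $f_!(1)$ of the Thom class under the Gysin map $f_!:H_G^*(Y)\to H_G^{*+r-1}(K_\Delta^{*r})$ built from the Thom isomorphism of $U$. (When $K_\Delta^r=\emptyset$ the section $\sigma$ is nowhere zero, so $\phi_X(e(I[G]))=0$ and the same identity holds trivially.)

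Granting this, the conclusion is a short computation. Fix $a\in A_G^l$ with $l\le N-r+1$ and suppose $\phi_Y(a)=f^*\phi_X(a)=0$. By the projection formula,
$$
\phi_X\bigl(a\cdot e(I[G])\bigr)=\phi_X(a)\cup f_!(1)=f_!\bigl(f^*\phi_X(a)\bigr)=f_!(0)=0.
$$
Since $a\cdot e(I[G])\in A_G^{l+r-1}$ and $l+r-1\le N$, the injectivity hypothesis for $\phi_X$ forces $a\cdot e(I[G])=0$ in $A_G$; because $e(I[G])$ is not a divisor of zero by Lemma~\ref{euler-nz}, we conclude $a=0$. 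This proves injectivity of $\phi_Y$ in the asserted range, the loss of exactly $r-1$ reflecting the codimension $\dim I[G]=r-1$ of the barycentric copy $Y$.

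I expect the main obstacle to lie in the rigorous construction of the Gysin map $f_!$ and the identity $\phi_X(e(I[G]))=f_!(1)$ in this equivariant, merely cellular (non-manifold) setting: one must invoke the equivariant Thom isomorphism with $Z_p$-coefficients for the representation bundle $I[G]$ over the Borel construction, and justify that the transverse vanishing of $\sigma$ along $Y$ realizes the Euler class as the cohomological dual of its zero set. All of this rests on the characteristic-class machinery for $(Z_p)^k$ from~\cite{hsiang1975} that already underlies Lemma~\ref{euler-nz}; once it is in place, the projection-formula step above is purely formal.
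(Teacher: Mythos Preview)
Your proposal is correct and matches the approach the paper has in mind: the paper itself defers the proof to~\cite{kar2009}, but the sketch given later in the proof of Theorem~\ref{tverberg-rays} (``Suppose that some $\xi\in A_G^l$ maps to zero in $H_G^l(Z)$\ldots\ then $\xi w^d\times u=0$\ldots'') is exactly the Euler-class/non-zero-divisor mechanism you spell out, with your Gysin/projection-formula formulation making the key step $f^*\phi_X(a)=0\Rightarrow\phi_X(a\cdot e(I[G]))=0$ explicit. Your identification of $U=K_\Delta^r\times\rint\Delta^{r-1}$ as an equivariant open disk bundle over $Y$ with fiber $I[G]$ is what supplies the Thom isomorphism you need, so the ``obstacle'' you flag is not serious here.
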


\section{The genus of $G$-spaces}
\label{genus-sec}

In this section we describe some measure of complexity for a $G$-space. Let $X$ be a paracompact free $G$-space, $G$ being a finite group. Informally, the main idea is that this measure can be estimated from the equivariant cohomology of $X$, by the statements like those in Lemmas~\ref{del-join-ind} and \ref{del-prod-ind}. Let us make a definition.

\begin{defn}
\emph{The free genus} of a free $G$-space $X$ is the least number $n$ such that $X$ can be covered by $n$ open subsets $X_1,\ldots, X_n$ so that every $X_i$ can be $G$-mapped to $G$. Denote the free genus by $\gfree(X)$.
\end{defn}

There are several kinds of genus for a $G$-space, here we only use the free genus, and call it simply ``genus''. The free genus was introduced in~\cite{kr1952,schw1957,schw1966}, different versions of this definition for non-free action are discussed in~\cite{bart1993}.

Let us explain the definition of the genus. The set $X_i$ in the definition can be $G$-mapped to $G$ iff the group $G$ acts on connected components of $X_i$ freely, we call such spaces \emph{inessential} in the sequel. In fact, for paracompact $X$ the sets $X_i$ in the definition of genus may be taken closed instead of open.

Let us state the properties of the genus, valid for paracompact spaces, following~\cite{vol2007}. 

\begin{enumerate}
\item (Monotonicity)
If there is a $G$-map $f: X\to Y$, then $\gfree(X) \le \gfree(Y)$;

\item (Subadditivity)
Let $X=A\cup B$, where $A$, $B$ are closed or open $G$-invariant subspaces. Then $\gfree(X) \le \gfree(A)+\gfree(B)$;

\item (Dimension upper bound)
$\gfree(X)\le \dim X+1$;

\item (Cohomology lower bound)
If the natural map $A_G^n\to H_G^n(X, M)$ is nonzero for some $G$-module $M$, then $\gfree(X) \ge n+1$.
\end{enumerate}

Take the deleted join $(\Delta^N)_\Delta^{*r}$ and the deleted product $(\Delta^N)_\Delta^r$, considered in the previous section for $r$ being a prime power, with an action of the corresponding $p$-torus. Then the cohomology lower bound and the dimension upper bound, with Lemmas~\ref{del-join-ind} and~\ref{del-prod-ind} give
$$
\gfree((\Delta^N)_\Delta^{*r}) = N+1,\quad \gfree((\Delta^N)_\Delta^r) = N-r+2.
$$

We need the following lemma, that can be considered a strengthening of the definition of genus. A particular case of this lemma for $G=Z_2$ was proved in~\cite[Theorem~9]{kar2009bu}.

\begin{lem}
\label{covering-genus}
Let $X$ be a paracompact $G$-space, let $\mathcal U = \{U_i\}_{i=1}^N$ be some open (or closed) covering of $X$ by inessential invariant subsets. Then there exist a point $x\in X$, that is covered by at least $\gfree(X)$ sets of $\mathcal U$.
\end{lem}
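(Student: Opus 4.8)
The plan is to restate the conclusion in terms of the covering multiplicity $m=\max_{x\in X}|\{i: x\in U_i\}|$ and to prove $m\ge\gfree(X)$ by exhibiting a $G$-map from $X$ into a free $G$-space of dimension $m-1$ and applying monotonicity of the genus. Observe first that the hypotheses force $X$ to be a free $G$-space: a $G$-map $f_i\colon U_i\to G$ is equivariant, so any $g$ fixing a point $x\in U_i$ also fixes $f_i(x)\in G$, and $G$ acts freely on itself; since the $U_i$ cover $X$, every stabilizer is trivial and $\gfree(X)$ is defined.

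For each $i$ I would fix a $G$-map $f_i\colon U_i\to G$, which exists because $U_i$ is inessential and invariant, and choose a partition of unity $\{\phi_i\}$ subordinate to the open cover $\{U_i\}$ (available by paracompactness). Since each $U_i$ is $G$-invariant, replacing $\phi_i$ by $|G|^{-1}\sum_{g\in G}\phi_i(g^{-1}\cdot)$ keeps $\mathrm{supp}\,\phi_i\subseteq U_i$ and $\sum_i\phi_i\equiv 1$, so I may assume every $\phi_i$ is $G$-invariant.

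I would then assemble the data into a single equivariant map into the $N$-fold join $G^{*N}$, whose points are the formal combinations $\bigoplus_i w_i g_i$ with $w_i\ge 0$ and $\sum_i w_i=1$. Put $F(x)=\bigoplus_i\phi_i(x)f_i(x)$; this is well defined and continuous because $\phi_i(x)>0$ forces $x\in U_i$ (where $f_i$ is defined) and the join topology absorbs the vanishing terms, and it is a $G$-map since the $\phi_i$ are invariant and the $f_i$ equivariant. The support of $F(x)$ lies in $\{i:x\in U_i\}$, hence has at most $m$ elements, so $F$ maps $X$ into the $(m-1)$-skeleton $Y_m$ of $G^{*N}$. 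The group $G$ acts freely on $G^{*N}$, hence on the invariant subcomplex $Y_m$, and $\dim Y_m=m-1$; the dimension upper bound gives $\gfree(Y_m)\le m$, and monotonicity applied to $F$ yields $\gfree(X)\le\gfree(Y_m)\le m$. Thus the multiplicity is at least $\gfree(X)$, and a point realizing the maximal multiplicity is covered by at least $\gfree(X)$ members of $\mathcal U$.

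The step I expect to be the main obstacle is the closed-cover version, where there is no partition of unity to start from. I would reduce it to the open case by a swelling argument: using normality of the paracompact space $X$, enlarge each closed invariant inessential $U_i$ to an open invariant set $V_i\supseteq U_i$ with the same nerve, chosen small enough that the $G$-map on $U_i$ extends over $V_i$ so that $V_i$ stays inessential. Since swelling preserves the nerve it preserves the maximal size of a subfamily with a common point; the open case then produces such a subfamily of size $\ge\gfree(X)$ for $\{V_i\}$, and nerve-preservation returns a common point of the corresponding closed sets. Carrying out this swelling $G$-equivariantly while keeping the sets inessential is the delicate part, and is where paracompactness and finiteness of the cover enter.
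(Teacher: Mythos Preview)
Your argument is correct and is essentially the paper's own proof: build a $G$-map $X\to G^{*N}$ from the $f_i$ and a subordinate partition of unity, observe that multiplicity $m$ forces the image into the $(m-1)$-skeleton, and conclude via the dimension upper bound and monotonicity. The paper phrases it as a proof by contradiction but the content is identical; it also does not spell out the averaging of the partition of unity or the reduction of the closed-cover case, which you handle more carefully than the paper does.
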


\begin{proof}
Since every $U_i$ can be mapped to $G$, then from the partition of unity, corresponding to $\mathcal U$, arises a map $f : X \to G^{*N}$. 

Consider the contrary: the covering $\mathcal U$ has multiplicity at most $\gfree(X)-1$. Then the image of $f$ is within the $\left(\gfree(X)-2\right)$-dimensional skeleton of $G^{*N}$. Now from the dimension upper bound and the monotonicity of the genus it follows that $\gfree(X)\le \gfree(X)-1$, which is a contradiction. 
\end{proof}

Note that this lemma is true if we consider the fixed-point-free genus $g_G(X)$ (see~\cite{bart1993,vol2007}) of a fixed point free $G$-space, and call a subset \emph{inessential} if none of its connected components is stabilized by the whole group $G$. This follows from the dimension upper bound for fixed-point-free genus.

\section{Proof of Theorem~\ref{tverberg-rays}}

Consider the simplex $\Delta = \Delta^{n-1}$, along with some identification of its vertices with $\mathcal F$. Take some large enough ball $B\subset\mathbb R^d$, containing all the sets of $\mathcal F$ in its interior. The configuration space that we study is $\Delta^r_\Delta\times B$, denote its elements by $(\alpha_1, \alpha_2, \ldots, \alpha_r, p)$. The points $\alpha_i$ in the simplex $\Delta$ will be considered as functions $\alpha_i : \mathcal F\to \mathbb R^+$ with unit sum.

Denote for brevity $\mathbb R^d = V$. Now let us map our configuration space to $V^r$ by the following rule. Let $\pi_K(p)$ be the orthogonal projection of $p$ to $K\in\mathcal F$. Put 
$$
f(\alpha_1, \alpha_2, \ldots, \alpha_r, p) = \bigoplus_{i=1}^r \sum_{K\in\mathcal F} \alpha_i(K) (\pi_K(p) - p),
$$ 
This map is evidently continuous and $G$-equivariant, if we identify $V^r$ with $V[G]$ ($V$-valued functions on $G$ with $G$-action by right multiplication by $g^{-1}$).

Denote the zero set of $f$ by $Z$. Similar to~\cite{kar2009}, the map $f$ can be considered as a section of $G$-equivariant vector bundle, its Euler class being
$$
e(f) = w^d\times u\in H_G^{rd}(\Delta^r_\Delta\times B, \Delta^r_\Delta\times \partial B),
$$
where $w$ is the image of the $e(I[G])$, $u$ is the generator of $H^d(B, \partial B)$. By Lemmas~\ref{euler-nz} and \ref{del-prod-ind}, $w^d\not=0\in H_G^{d(r-1)}(\Delta^r_\Delta)$, and $e(f)\not=0$. 

Similar to the proof of Lemma~\ref{del-prod-ind} in~\cite{kar2009}, we conclude that the natural map $A_G^l\to H_G^l(Z)$ is injective in dimensions $l \le n - r - (r-1)d = n - 1 - (r-1)(d+1)$. Let us sketch the proof of this claim. Suppose that some $\xi\in A_G^l$ maps to zero in $H_G^l(Z)$ by the natural map $\pi_Z^* : A_G^*\to H_G^*(Z)$, then by the properties of the cohomology multiplication
$$
\xi w^d\times u = 0\in H_G^{rd + l}(\Delta^r_\Delta\times B, \Delta^r_\Delta\times \partial B),
$$ 
which contradicts with Lemma~\ref{del-prod-ind}. 

It follows from the cohomology lower bound on the genus that $\gfree(Z)\ge n - (r-1)(d+1) \ge c + 1$. Now we are going to use this fact and show that the point $p$ is not contained in any $K\in\mathcal F$ with $\alpha_i(K)>0$.

We can find small enough $\varepsilon>0$ so that the family of $\varepsilon$-neighborhoods $\mathcal F(\varepsilon) = \{K(\varepsilon)\}_{K\in\mathcal F}$ has covering multiplicity at most $c$. Now consider the following open subsets of $Z$: for any $K\in\mathcal F$ denote 
$$
U_K = \{(\alpha_1, \alpha_2, \ldots, \alpha_r, p)\in Z : \exists i\in[r]\ \text{such that}\ \alpha_i(K) > 0\ \text{and}\ p\in K(\varepsilon)\}.
$$
Note that for any $(\alpha_1, \alpha_2, \ldots, \alpha_r, p)\in U_K$ there is only one $i\in[r]$ such that $\alpha_i(K) > 0$, since we consider the deleted product $\Delta^r_\Delta$. Hence the set $U_K$ is partitioned into connected components, that are permuted by $G$ freely, i.e. it is inessential. The family $\{U_K\}$ covers $Z$ with multiplicity at most $c$. If it does cover $Z$, than $\gfree(Z)\le c$, that was shown above to be false. 

Therefore, there exists a combination $(\alpha_1, \alpha_2, \ldots, \alpha_r, p)$ with the following property: if $\alpha_i(K) > 0$, then $p\not\in K(\varepsilon)$. Put
$$
\mathcal F_i = \{K\in\mathcal F : \alpha_i(K) > 0\},
$$
the families $\mathcal F_i$ are disjoint. For any $i\in[r]$ the point $p$ is in the convex hull of the points $X_i = \{\pi_K(p)\}_{K\in\mathcal F_i}$, reducing the family $\mathcal F_i$ if needed, we may assume that $p$ is in the relative interior of $X_i$. It is clear, that for any ray $\rho$ starting at $p$, some of the angles $\angle(\rho, \pi_K(p)-p)$ ($K\in\mathcal F_i$) is at least $90^\circ$, and $\rho$ cannot intersect the corresponding set $K$.  

\section{Proof of Corollary~\ref{cpt}}

If $r$ is a prime power, then the statement follows from Theorem~\ref{tverberg-rays}. Otherwise choose a positive integer $k$ so that $R=k(r-1)+1$ is prime, such $k$ exists by the Dirichlet theorem on arithmetic progressions. Now consider the family $\mathcal G$ of $kn$ sets, that is obtained from $\mathcal F$ by taking each member of $\mathcal F$ exactly $k$ times. Any point in $\mathbb R^d$ belongs to at most $kc$ sets of $\mathcal G$. The inequality
$$
kn \ge (d+1)(R-1) + kc + 1 = k(d+1)(r-1)+kc+1
$$ 
holds since $kn \ge k(d+1)(r-1)+kc+k$. Hence there exists a point $p\in\mathbb R^d$ such that any ray $\rho$ starting at $p$ does not intersect at least $R$ members of $\mathcal G$, In this case it is clear that $\rho$ does not intersect at least $r$ members of $\mathcal F$.

\section{Proof of Theorem~\ref{tverberg-hp}}

The proof mainly follows the proof of Theorem~\ref{tverberg-rays}, though some changes are required. 

Denote again 
$$
f(\alpha_1, \alpha_2, \ldots, \alpha_r, p) = \bigoplus_{i=1}^r \sum_{K\in\mathcal F} \alpha_i(K) (\pi_K(p) - p),
$$ 
to use the above reasonings, the map $f$ should not have zeros on $\Delta^r_\Delta\times\partial B$ for large enough ball $B$. But in the case of hyperplanes this is not true. We need the following lemma from~\cite{adw1984}.

\begin{lem}
\label{projbounded}
Suppose $\mathcal F=\{h_1, \ldots, h_n\}$ is a set of hyperplanes in $\mathbb R^d$, consider the orthogonal projections $\pi_1, \ldots, \pi_n$ onto the respective hyperplanes. Then there exists a convex body $P$, such that 
$$
\forall i=1,\ldots,n,\ \pi_i(P)\subseteq P.
$$ 
\end{lem}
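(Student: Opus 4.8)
The plan is to produce $P$ as a bounded convex set invariant under all of the affine projections $\pi_i$, and then to fatten it slightly so as to obtain a genuine convex body. The starting observation is that each $\pi_i$ is the affine map $\pi_i(x)=x-(\langle n_i,x\rangle-c_i)n_i$, where $n_i$ is a unit normal and $c_i$ the offset of $h_i$; its linear part $P_i=I-n_in_i^{\top}$ is an orthogonal projection, so $\|P_i\|\le 1$, and $\pi_i\circ\pi_i=\pi_i$. The condition $\pi_i(P)\subseteq P$ for all $i$ is exactly what is needed to replace the ball $B$ in the proof of Theorem~\ref{tverberg-rays}: on $\partial P$ every projection vector $\pi_i(p)-p$ points into $P$, so the map $f$ has no zeros on $\Delta^r_\Delta\times\partial P$.

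\textbf{Construction from a bounded orbit.} Let $S$ be the semigroup generated by $\pi_1,\dots,\pi_n$ under composition. The core claim is that $S$ has bounded orbits, i.e.\ for a suitable starting point $x_0$ the set $O=\{s(x_0):s\in S\cup\{\mathrm{id}\}\}$ is bounded. Granting this, put $P_0=\clconv O$. Since a continuous affine map commutes with taking the closed convex hull of a compact set, and $\pi_i(O)\subseteq O$ (because $\pi_i\circ s\in S$), we get $\pi_i(P_0)=\clconv\pi_i(O)\subseteq\clconv O=P_0$, so $P_0$ is compact, convex and invariant. As $P_0$ may be lower-dimensional, set $P=P_0+\varepsilon B^d$ for a small ball $B^d$; using $\pi_i(A+C)=\pi_i(A)+P_iC$ together with $P_iB^d\subseteq B^d$, invariance is preserved while $P$ now has nonempty interior.

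\textbf{Boundedness of orbits (the crux).} One may first assume the normals span $\mathbb R^d$: on $W=(\lin\{n_i\})^{\perp}$ every $\pi_i$ acts as the identity, so the $W$-coordinate of each orbit point equals that of $x_0$, and the dynamics splits off $W$ and lives in $W^{\perp}$, where the normals span. Two facts then drive an induction on the number of independent normal directions. First, the Pythagorean identity $|\pi_i(x)-q|^2=|x-q|^2-|x-\pi_i(x)|^2$, valid for every $q\in h_i$, which says that projections are non-expansive toward any point of their target hyperplane. Second, idempotency, which forbids drifting to infinity while repeatedly applying a single $\pi_i$. The key structural point is that any putative escape direction $v$ (a limit of $y^{(m)}/|y^{(m)}|$ along an unbounded subsequence) must be parallel to every hyperplane used infinitely often as a last projection, since each such point lies on $h_i$ and hence $\langle n_i,y^{(m)}\rangle=c_i=o(|y^{(m)}|)$. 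Moreover the coordinate $\langle v,\cdot\rangle$ is left unchanged by every $\pi_i$ with $\langle n_i,v\rangle=0$, and is reset to a value determined solely by the coordinates orthogonal to $v$ whenever a projection with $\langle n_i,v\rangle\neq 0$ is applied. This pins the growth in the direction $v$ to the growth in $v^{\perp}$, and feeding it back into the induction produces the contradiction.

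I expect the boundedness of orbits to be the main obstacle. The maps are only non-expansive, not contractions (the linear parts $P_i$ are idempotents of norm $1$), so telescoping estimates built from $|\pi_i(x)-q|^2=|x-q|^2-|x-\pi_i(x)|^2$ only yield bounds that grow with the number of steps; genuine boundedness must exploit the geometric rigidity coming from the spanning condition rather than mere non-expansiveness. Once orbits are known to be bounded, the remaining steps — closed convex hull, fattening by $\varepsilon B^d$, and the resulting absence of zeros of $f$ on $\partial P$ — are routine.
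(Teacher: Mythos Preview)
The paper does not prove this lemma; it is quoted verbatim from Aharoni--Duchet--Wajnryb \cite{adw1984} without argument, so there is no in-paper proof to compare against. Your overall scheme --- bounded orbit of the projection semigroup, closed convex hull, then Minkowski-fatten by $\varepsilon B^d$ --- is the natural one, and the hull and fattening steps are correct as written (in particular $\pi_i(P_0+\varepsilon B^d)=\pi_i(P_0)+P_i(\varepsilon B^d)\subseteq P_0+\varepsilon B^d$ is valid).

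The boundedness sketch, which you rightly flag as the crux, is not yet a proof and contains two concrete slips. First, when $\alpha=\langle n_i,v\rangle\neq 0$ the $v$-coordinate is not ``reset to a value determined solely by the coordinates orthogonal to $v$'': decomposing $n_i=\alpha v+n_i'$ with $n_i'\perp v$ gives
\[
\langle v,\pi_i(x)\rangle=(1-\alpha^2)\,\langle v,x\rangle-\alpha\langle n_i',x\rangle+\alpha c_i,
\]
a contraction by the factor $1-\alpha^2$ plus a $v^\perp$-controlled term, not a reset. Second, your escape-direction argument only yields $v\perp n_i$ for those $i$ occurring as the \emph{last} letter of infinitely many words $w_m$; there may be a single such $i$, so this does not by itself supply the spanning input your induction needs. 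The result is true --- it is precisely the theorem of \cite{adw1984} --- but closing these gaps amounts to reproducing their argument rather than the heuristic you have written. One tangential correction: invariance of $P$ does \emph{not} make $f$ zero-free on $\Delta^r_\Delta\times\partial P$ (a convex combination of inward-pointing vectors at a boundary point can still vanish); the paper does not claim this either and instead passes to the perturbation $f_\varepsilon$.
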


Take the convex body $P$ from Lemma~\ref{projbounded}. Denote the zero set of $f$ on $\Delta^r_\Delta\times P$ by $Z$, this set still can have nonempty intersection with $\Delta^r_\Delta\times\partial P$. 

Suppose that $P$ contains the origin, and approximate the map $f$ on $P$ by   
$$
f_\varepsilon(\alpha_1, \alpha_2, \ldots, \alpha_r, p) = \bigoplus_{i=1}^r \sum_{K\in\mathcal F} \alpha_i(K) ((1-\varepsilon)\pi_K(p) - p),
$$
denote its zero set by $Z_\varepsilon$. It is clear that 
$$
Z_\varepsilon\cap \Delta^r_\Delta\times\partial P=\emptyset,
$$ 
and, similar to the proof of Theorem~\ref{tverberg-rays}, $\gfree(Z_\varepsilon)\ge c + 1$. 

Suppose that $\gfree(Z)\le c$, then its open cover by $c$ inessential sets should be an open cover for $Z_\varepsilon$, for small enough $\varepsilon$. Hence, $\gfree(Z_\varepsilon)\le c$, that is not true. Therefore, $\gfree(Z)\ge c + 1$, and the end of the reasoning is the same as in the proof of Theorem~\ref{tverberg-rays}.

\end{document}